\newtheorem{lemma}{Lemma}
\newtheorem{theorem}[lemma]{Theorem}
\begin{document}

\title{Weak K\"{o}nig's lemma implies the uniform continuity theorem: a direct proof}
\author[Hendtlass]{Matthew Hendtlass}
 \address{School of Mathematics and Statistics,
    University of Canterbury,
    Christchurch 8041,
    New Zealand}
\email{matthew.hendtlass@canterbury.ac.nz}

\begin{abstract}
\noindent
We show in Bishop's constructive mathematics---in particular, using countable choice---that weak K\"{o}nig's lemma implies the uniform continuity theorem.
\end{abstract}

\maketitle

\bigskip
\noindent
In \cite{HD} Hannes Diener proved, as part of the programme of reverse constructive mathematics, that weak K\"{o}nig's lemma
\begin{quote}
 \textbf{WKL}: Every infinite, decidable, binary tree has an infinite path. 
\end{quote} 
implies the uniform continuity theorem
\begin{quote}
 \textbf{UCT}: Every pointwise continuous function $f:[0, 1]\rightarrow\mathbf{R}$ is uniformly continuous. 
\end{quote} 
in Bishop's constructive mathematics. Diener's proof relies on several other results in reverse constructive mathematics; we give a short, direct proof.

\begin{theorem}
\label{T}
Weak K\"{o}nig's lemma implies the uniform continuity theorem.
\end{theorem}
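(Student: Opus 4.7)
Given pointwise continuous $f:[0,1]\to\mathbf{R}$ and $\varepsilon>0$, the plan is to construct a decidable binary tree $T_{\varepsilon}$ with two complementary features: any infinite path through it decodes to a real at which $f$ would fail to be continuous, while emptiness of $T_{\varepsilon}$ at some level $N$ hands back an explicit modulus of uniform continuity at level $\varepsilon$. Pointwise continuity will rule out an infinite path, so \textbf{WKL}, applied contrapositively, forces $T_{\varepsilon}$ to be eventually empty and supplies the desired $\delta>0$.

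Identify $\sigma\in\{0,1\}^n$ with the dyadic interval $I_\sigma\subseteq[0,1]$ of length $2^{-n}$ it codes. Declare $\sigma\in T_{\varepsilon}$ iff there exist dyadic rationals $x,y$ of denominator $2^{2n}$ in a slight enlargement of $I_\sigma$, together with rational approximations to $f(x)$ and $f(y)$ of precision $2^{-3n}$, certifying $|f(x)-f(y)|>\varepsilon/2$. The defining condition is a bounded finite search, so $T_{\varepsilon}$ is decidable; since $I_\tau\supseteq I_\sigma$ whenever $\tau$ is a prefix of $\sigma$, a witness for $\sigma$ is also a witness for $\tau$, giving prefix closure.

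For the path-implies-discontinuity step, an infinite path $\alpha$ decodes to $x^{*}=\sum_{i\geq 0}\alpha_i 2^{-i-1}\in\bigcap_n I_{\alpha|_n}$. Pointwise continuity at $x^{*}$ provides $\delta_0>0$ with $|f(x^{*})-f(z)|<\varepsilon/8$ for $|z-x^{*}|<\delta_0$; taking $n$ large enough that both sample witnesses for $\alpha|_n\in T_{\varepsilon}$ lie within $\delta_0$ of $x^{*}$ yields $|f(x)-f(y)|<\varepsilon/4$, flatly contradicting $|f(x)-f(y)|>\varepsilon/2$. Thus \textbf{WKL} delivers some $N$ at which $T_{\varepsilon}$ is empty---and, by prefix closure, at every level $n\geq N$.

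For the empty-tree-implies-uniform-continuity step, first note that $T_n=\emptyset$ for all $n\geq N$ gives $|f(p)-f(q)|\leq\varepsilon/2$ (up to approximation slack) for any two dyadic samples of denominator $2^{2n}$ within a common enlarged $I_\sigma$ of length $2^{-n}$. For arbitrary $x,y\in[0,1]$ with $|x-y|<2^{-N-1}$, taking nearest dyadic approximations $q_n^{x},q_n^{y}$ of denominator $2^{2n}$ with $n\to\infty$, the above gives $|f(q_n^{x})-f(q_n^{y})|\leq\varepsilon/2$ for all large $n$, while pointwise continuity at $x$ and $y$ gives $f(q_n^{x})\to f(x)$ and $f(q_n^{y})\to f(y)$, so $|f(x)-f(y)|\leq\varepsilon/2<\varepsilon$. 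The main obstacle I anticipate is orchestrating the tree's decidability, its ``slight enlargement'' of dyadic intervals, and the rational-approximation tolerances so that the path-to-discontinuity, prefix-closure, and sample-based oscillation inferences all line up cleanly; a further delicate point, where countable choice is likely to surface, is the contrapositive use of \textbf{WKL} in Bishop's framework.
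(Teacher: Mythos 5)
There is a genuine gap, and it sits exactly where you flagged a ``delicate point'': the contrapositive use of \textbf{WKL}. Your argument shows that an infinite path through $T_\varepsilon$ would contradict pointwise continuity, hence that no infinite path exists; the contrapositive of \textbf{WKL} then yields only the negative statement $\neg\forall n\,\exists\sigma\,(\sigma\in T_\varepsilon\cap 2^n)$. To pass from this to what you actually need, namely $\exists N\,(T_\varepsilon\cap2^N=\emptyset)$, is an instance of Markov's principle (for a decidable tree, $T_\varepsilon\cap2^n\neq\emptyset$ is a decidable predicate of $n$), and Markov's principle is not available in Bishop's constructive mathematics. Converting that negative statement into a witnessed level $N$ is the entire content of the theorem; with it granted, your argument is essentially the classical compactness proof. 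The paper avoids the contrapositive altogether via Ishihara's longest path principle (equivalent to \textbf{WKL}): it takes a \emph{longest} path $\alpha$ of its tree---which exists whether or not the tree is infinite---lets $\xi$ be the point that the corresponding nested intervals shrink to, and uses pointwise continuity of $f$ at the single point $\xi$ to exhibit, \emph{positively}, a level $n$ with $\bar{\alpha}(n)\notin T$; the longest path principle then returns $T\subset2^{<n}$ outright. Your construction has no analogue of this step, and it is the step that cannot be skipped.

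There are also problems upstream in the construction of $T_\varepsilon$. Prefix closure fails as stated: a witness pair for $\sigma$ at level $n$ consists of dyadics of denominator $2^{2n}$, which need not have denominator $2^{2m}$ and so are not eligible witnesses for a prefix $\tau$ of length $m<n$; worse, the coarse level-$m$ grid can simply miss oscillation that the finer level-$n$ grid detects, so $\sigma\in T_\varepsilon$ genuinely does not imply $\tau\in T_\varepsilon$, and repairing this by taking the downward closure destroys decidability (membership becomes an unbounded search over extensions). Separately, ``certifying $|f(x)-f(y)|>\varepsilon/2$'' from rational approximations requires comparing a rational with the real number $\varepsilon/2$, which is not decidable; one needs cotransitivity together with countable choice to fix a total $\{0,1\}$-valued selection, exactly as in the paper's Lemma \ref{L1} (the function $\gamma$) and in the construction of the second tree in the main proof. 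The paper's Lemma \ref{L1} exists precisely to package the question ``is there a witness pair at scale $\delta$?'' into a decidable tree whose path \emph{codes} a canonical witness pair; that machinery, plus the longest-path argument, is what your bounded-search tree is attempting to shortcut.
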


\noindent
The idea of our proof is simple: given a continuous function $f:[0,1]\rightarrow\mathbf{R}$ and some $\varepsilon>0$, we use WKL to focus in on a point where the function exhibits (almost) greatest variation within $\varepsilon$ and then use continuity at that point to find our modulus $\delta$ of uniform continuity for $\varepsilon$. Before proving Theorem \ref{T} we must set up some notation.

\bigskip
\noindent
For each $n>0$ we let $S_n=\{0,2^{-n},\ldots,1-2^{-n},1\}$ and we write $\mathcal{D}$ for the set $\cup\{S_n:n\in\omega\}$ of dyadic rationals. We define a one-one function $g$ from the set $2^{<\omega}$ of finite binary sequences to $\mathcal{D}$ by
 $$
  g(a)=\sum_{i=0}^{|a|-1} a(i)2^{-(i+1)}
 $$
\noindent
and associate $x\in S_n$ with the unique finite binary string $a$ of length $n$ such that $g(a)=x$. The \emph{sum} of two finite binary sequences $a,b$ each of length $n$ is the binary sequence $a\oplus b$ of length $2n$ given by
 $$
 a\oplus b(n)=\left\{\begin{array}{ll}a(n/2) & n\mbox{ is even}\\ b((n-1)/2) & n\mbox{ is odd}.\end{array}\right. 
 $$
Both $g$ and $\oplus$ extend to functions on infinite binary sequences and we make no notational distinction between the functions on finite and infinite sequences. We let $\pi_0,\pi_1$ be the left and right inverses of $\oplus$ respectively; that is $\pi_0(a\oplus b)=a$ and $\pi_1(a\oplus b)=b$ for all suitable pairs $a,b\in2^{<\omega}\cup2^\omega$. For $\alpha\in2^\omega$ and $n\in\mathbf{N}$, $\bar{\alpha}(n)$ denotes the unique binary sequence of length $n$ that $\alpha$ extends. The downward closure of subset $A$ of $2^{<\omega}$ is $A\downarrow = \{a\in2^{<\omega}: \exists_{a^\prime\in A}(a < a^\prime)\}$, where $a < a^\prime$ if $a^\prime$ extends $a$. 

\bigskip
\noindent
For a given function $f:[0, 1]\rightarrow\mathbf{R}$ we define a predicate $\varphi_f$ on $\mathbf{R}^+\times\mathbf{R}^+$ by
 $$
  \varphi_f(\varepsilon,\delta)\equiv \exists_{x,y\in[a,b]}(|x-y|<\delta\wedge|f(x)-f(y)|>\varepsilon);
 $$
$\varphi_f(\varepsilon,\delta)$ holds if we have a witness that $\delta$ is not a modulus of uniform continuity for $f,\varepsilon$. Thus to show that $f:[0, 1]\rightarrow\mathbf{R}$ is uniformly continuous we must, given any $\varepsilon>0$, find some $\delta>0$ such that $\varphi_f(\varepsilon,\delta)$ is false. The next lemma shows how we can use \textbf{WKL} to reduce the truth of $\varphi_f(\varepsilon,\delta)$ to whether or not some specific $x,y\in[0, 1]$ are witnesses of  $\varphi_f(\varepsilon,\delta)$.

\begin{lemma}\label{L1}
 \textbf{WKL} $\vdash$ Let $f:[0, 1]\rightarrow\mathbf{R}$ be pointwise continuous and let $\delta,\varepsilon$ be positive real numbers. Then there exist $x,y\in[a,b]$ such that if $\delta$ is not a modulus of uniform continuity for $f$, then $x,y$ witness this:
  $$
   \varphi_f(\varepsilon,\delta) \rightarrow |x-y|<\delta \wedge |f(x)-f(y)|>\varepsilon.
  $$
\end{lemma}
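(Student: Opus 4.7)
The plan is to apply \textbf{WKL} to a decidable binary tree whose infinite paths encode pairs of dyadic rationals realising, in the limit, the supremum of $|f(p)-f(q)|$ over pairs with $|p-q|\le\delta$. Whenever $\varphi_f(\varepsilon,\delta)$ holds the strict inequalities force this supremum strictly above $\varepsilon$, so the resulting pair witnesses the conclusion.

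By countable choice, fix a double sequence of rationals $v_n(a,b)$ with $|v_n(a,b)-|f(g(a))-f(g(b))||\le 2^{-n}$ for each $n\in\omega$ and each $(a,b)\in S_n\times S_n$, and fix a rational $\delta^\ast$ with $0<\delta^\ast<\delta$. Let $V_n$ be the (finite, rational) maximum of $v_n(a',b')$ over pairs $(a',b')\in S_n^2$ with $|g(a')-g(b')|\le\delta^\ast$. Declare $a\oplus b$ of length $2n$ to lie in $T$ iff $|g(a)-g(b)|\le\delta^\ast$ and $v_n(a,b)\ge V_n-2^{-n}$; at odd length $2n+1$ include a string iff at least one of its two one-bit extensions satisfies the even-length condition. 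This $T$ is a decidable binary tree, and since $V_n$ is always attained, every level of $T$ is non-empty, so \textbf{WKL} delivers an infinite path $\gamma$. Set $\alpha=\pi_0(\gamma)$, $\beta=\pi_1(\gamma)$, $x=g(\alpha)$, $y=g(\beta)$; by the level-wise constraint $|x-y|\le\delta^\ast<\delta$.

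For the conditional, suppose $\varphi_f(\varepsilon,\delta)$ holds, by a witness $(x^\ast,y^\ast)$. Since both inequalities in $\varphi_f$ are strict there is a fixed $\eta>0$ with $|x^\ast-y^\ast|\le\delta^\ast$ and $|f(x^\ast)-f(y^\ast)|>\varepsilon+\eta$ (provided $\delta^\ast$ was chosen sufficiently close to $\delta$). Dyadic approximations $(a^\ast,b^\ast)\in S_n^2$ to $(x^\ast,y^\ast)$ then satisfy the first clause of the tree and have $v_n(a^\ast,b^\ast)>\varepsilon+\eta/2$ for all large $n$, so $V_n>\varepsilon+\eta/2$ eventually; hence $v_n(\bar\alpha(n),\bar\beta(n))>\varepsilon+\eta/2-2^{-n}$, and taking $n\to\infty$ using pointwise continuity of $f$ at $x$ and at $y$ gives $|f(x)-f(y)|\ge\varepsilon+\eta/2>\varepsilon$, as required.

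The principal obstacle is the choice of $\delta^\ast$: it has to be strictly less than $\delta$, yet at least as large as the a priori unknown witness distance $|x^\ast-y^\ast|$. I anticipate handling this by introducing a parameter $k\in\omega$, encoded in an initial segment of the \textbf{WKL}-path (say in unary as $0^k 1$), and using $\delta^\ast_k:=\delta-2^{-k}$ in the $k$-th sub-tree; the combined tree remains decidable and infinite without hypothesis, and since any witness of $\varphi_f(\varepsilon,\delta)$ has positive slack from $\delta$, branches with $2^{-k}$ below that slack support the analysis above.
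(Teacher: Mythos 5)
There is a genuine gap---in fact two---and both sit exactly where the constructive difficulty of the lemma lives. First, the set you define is not a tree. If $a\oplus b$ of length $2n$ satisfies $v_n(a,b)\ge V_n-2^{-n}$, there is no reason its length-$2(n-1)$ initial segment should satisfy $v_{n-1}\ge V_{n-1}-2^{-(n-1)}$: since $f$ is only \emph{pointwise} continuous, changing a dyadic point by $2^{-n}$ can change its $f$-value arbitrarily, so a near-maximiser at level $n$ need not truncate to a near-maximiser at level $n-1$. Hence ``every level is non-empty'' does not give you an infinite tree to feed to \textbf{WKL}. The obvious repairs both fail: taking the downward closure of $\bigcup_n A_n$ makes membership an unbounded existential (not decidable), while imposing the near-maximality condition hereditarily at every even level restores decidability and downward closure but loses infiniteness, for exactly the reason just given. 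Second, the $\delta^\ast$ device does not work. Prefixing with $0^k1$ and using $\delta^\ast_k=\delta-2^{-k}$ only helps if the \textbf{WKL} path lands in a subtree with $2^{-k}$ below the slack $\delta-|x^\ast-y^\ast|$; but \textbf{WKL} merely asserts that \emph{some} infinite path exists, and that path may be $0^\omega$ (which the combined tree must contain to be downward closed, and which codes no pair at all) or may commit to a $k$ that is too small, in which case $V_n^{(k)}$ never sees the witness and the extracted pair need not satisfy the conclusion. You cannot steer a \textbf{WKL} path into the good subtree.

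The paper's proof avoids both traps by not maximising anything. Countable choice yields an approximate decision function $\gamma(x,y,n)$ on dyadic pairs whose value $1$ outright certifies that $(x,y)$ witnesses $\varphi_f(\varepsilon,\delta)$ and whose value $0$ certifies only a weakened negation ($|x-y|>\delta-2^{-n}$ or $|f(x)-f(y)|<\varepsilon+2^{-n}$); this handles the undecidability of the strict inequalities without replacing $\delta$ by a rational $\delta^\ast$. The tree is then the full binary tree for as long as no pair in $S_n$ has been certified, and collapses to the single branch $x\oplus y$ (extended by zeros) as soon as one is; this is trivially decidable, downward closed and infinite, and if $\varphi_f(\varepsilon,\delta)$ holds then pointwise continuity supplies a dyadic witness, the collapse occurs, and the unique infinite path is forced to code a genuine witnessing pair. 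If you want to rescue a maximisation argument along your lines, you would need the truncation step to move $f$-values by a controlled amount, i.e.\ a modulus of uniform continuity---which is what the theorem is trying to establish.
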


\begin{proof}
Using countable choice, construct a function $\gamma:\mathcal{D}^2\times\mathbf{N}\rightarrow2$ such that
 \begin{eqnarray*}
  \gamma(x,y,n)=0 & \Rightarrow & |x-y|>\delta-2^{-n} \vee |f(x)-f(y)|<\varepsilon+2^{-n},\\
  \gamma(x,y,n)=1 & \Rightarrow & |x-y|<\delta \wedge |f(x)-f(y)|>\varepsilon;
 \end{eqnarray*}
\noindent
further we may assume that $\gamma$ is non-decreasing in the third argument. So if $\gamma(x,y,n)=1$, then $x, y$ are witnesses of $\varphi_f(\varepsilon,\delta)$.
Using $\gamma$, we can construct an increasing binary sequence $(\lambda_n)_{n\in\mathbf{N}}$ such that 
 \begin{eqnarray*}
  \lambda_n=0 & \Rightarrow & \forall_{x,y\in S_n}\gamma(x,y,n)=0,\\
  \lambda_n=1 & \Rightarrow & \exists_{x,y\in S_n}\gamma(x,y,m)=1.
 \end{eqnarray*}
\noindent
Finally we construct a decidable binary tree $T$ as follows. If $\lambda_n=0$ we let $T_n=2^n$, and if $\lambda_{n-1}=1$ we set $T_n=\{\sigma*0:\sigma\in T_{n-1}\mbox{ and }|\sigma|=\mathrm{ht}(T)\}$. If $\lambda_n=1-\lambda_{n-1}$, we let $x,y$ be the minimal elements of $S_n$ such that $\gamma(x,y,n)=1$  and we set $T_n=(2^n\cup\{x\oplus y\})\downarrow$---the branch $x\oplus y$ is the unique branch of $T_n$ with length $\mathrm{ht}(T_n)$, and it codes the witnesses $x,y$ that $\delta$ is not a modulus of uniform continuity for $\varepsilon$. Then
 $$
 T=\bigcup_{n\in\mathbf{N}} T_n
 $$
\noindent
is an infinite decidable tree.

\bigskip
\noindent
Using \textbf{WKL} we can construct an infinite path $\alpha$ through $T$. Set $x=g(\pi_0\alpha),y=g(\pi_1\alpha)$. Suppose there exist $u,v\in[0,1]$ such that $|u-v|<\delta$ and $|f(u)-f(v)|>\varepsilon$. Since $f$ is pointwise continuous, there must exist such $u,v\in \mathcal{D}$. Hence $\gamma(u,v,n)=1$ for some $n\in\mathbf{N}$ such that $u,v\in S_n$, so $\lambda_n=1$. It now follows from the construction of $T$ that $x,y$ have the desired property.
\end{proof}

\bigskip
\noindent
We recall a result of Hajime Ishihara \cite{Ish}: \textbf{WKL} is equivalent to the longest path principle
 \begin{quote}
  \textbf{LPP}: Let $T$ be a decidable tree. Then there exists $\alpha\in2^\omega$ such that for all $n$, if $\bar{\alpha}(n)\notin T$, then $T\subset2^{<n}$.
 \end{quote}
To get a longest path for a decidable tree $T$ apply \textbf{WKL} to the decidable  tree
 $$
  \{a\in2^{<\omega}: a\in T \mbox{ or } \exists_{b \in T}(\vert b\vert = \mathrm{ht}(T) \wedge a > b)\},
 $$
where $\vert b\vert$ is the length of $b$ and $\mathrm{ht}(T)$ is the height of $T$.

\bigskip
\noindent
Here then is our \textbf{proof of Theorem \ref{T}}:

\bigskip
\begin{proof}
Let $f:[0,1]\rightarrow\mathbf{R}$ be a pointwise continuous function and fix $\varepsilon>0$. We define a function $J$ taking finite binary sequences to subintervals of $[0,1]$ inductively: $J_{()}=[0,1]$ and if $J_u=[p,q]$, then $J_{u*0}=[p,(p+q)/2]$ and $J_{u*1}=[(p+q)/2,q]$. By repeated application of the lemma, let $x_u,y_u\in J_u$ be such that
  $$
   \varphi_{f|_{J_u}}(\varepsilon,2^{-|u|}) \rightarrow |x_u-y_u|<2^{-|u|} \wedge |f(x_u)-f(y_u)|>\varepsilon / 2,
  $$
\noindent
and using countable choice construct a decidable tree $T$ such that
 \begin{eqnarray*}
  u\in T & \Rightarrow & 
  |f(x_u)-f(y_u)|>\varepsilon / 2-2^{-|u|}\\
  u\notin T & \Rightarrow & 
  |f(x_u)-f(y_u)|<\varepsilon / 2.
 \end{eqnarray*}
\noindent
Let $\alpha$ be a longest path of $T$, and let $\xi$ be the unique element of
 $$
  \bigcap_{n\in\mathbf{N}}J_{\overline{\alpha}(n)}.
 $$  
\noindent
Using the continuity of $f$ at $\xi$ we can find $\delta>0$ such that $|f(x)-f(y)|<\varepsilon/2$ for all $x,y\in(\xi-\delta,\xi+\delta)$; let $n$ be such that $2^{-n+1}<\max\{\delta,\varepsilon\}$. If $u=\overline{\alpha}(n)\in T$, then $|x_u-y_u|<\delta$ and $|f(x_u)-f(y_u)|>\varepsilon-2^{-n}>\varepsilon/2$ contradicting our choice of $\delta$. Hence $\overline{\alpha}(n)\notin T$, so $T\subset 2^{<n}$. It follows from Lemma \ref{L1} and the construction of $T$ that for all $x,y\in[0,1]$, if $|x-y|<2^{-n}$, then $|f(x)-f(y)|<\varepsilon$.
\end{proof}


\begin{thebibliography}{99}

\bibitem{HD}H. Diener, Weak K\"{o}nig's lemma implies the uniform continuity theorem, Computability 2(1), p. 9--13, 2013.

\bibitem{Ish}H. Ishihara,  `Weak K\"{o}nig's Lemma Implies Brouwer's Fan Theorem: A Direct Proof', Notre Dame Journal of Formal Logic 47(2), p. 249--252, 2006.

\end{thebibliography}
\end{document}